\definecolor {processblue}{cmyk}{0.96,0,0,0}
\newtheorem{theorem}{Theorem}[section]
\newtheorem{proposition}[theorem]{Proposition}
\newtheorem{lemma}[theorem]{Lemma}
\theoremstyle{definition}
\newtheorem{definition}[theorem]{Definition}
\theoremstyle{remark}
\theoremstyle{remark}
\newtheorem{example}[theorem]{Example}
\theoremstyle{remark}
\begin{document}

\title{Toeplitz Subshifts with Trivial Centralizers and Positive Entropy}

\author{Kostya Medynets}
\email{medynets@usna.edu}
\address{Mathematics Department, United States Naval Academy, Annapolis, MD 21402}
\thanks{The research of K.M. was supported by NSA YIG H98230258656.}

\author{MIDN James P. Talisse}
\email{jptalisse@gmail.com}
\address{Mathematics Department, United States Naval Academy, Annapolis, MD 21402}

\keywords{Topological dynamics, symbolic dynamics, automorphism group, centralizer, topological entropy}
\subjclass[2010]{37B05, 37B40,  37B50}

\begin{abstract}
Given a dynamical system $(X,G)$, the centralizer $C(G)$ denotes the group of all homeomorphisms of $X$ which commute with the action of $G$. This group is sometimes called the automorphism group of the dynamical system $(X,G)$. In this note, we generalize the construction of Bulatek and Kwiatkowski (1992) to $\mathbb Z^d$-Toepltiz systems by identifying a class of $\mathbb Z^d$-Toeplitz systems that have trivial centralizers. We show that this class of $\mathbb Z^d$-Toeplitz with trivial centralizers contains systems with positive topological entropy.

\end{abstract}
\maketitle

\section{Introduction}

Toeplitz dynamical systems were first introduced by Jacobs and Keane in \cite{jacobs19690}. They provided a classical definition for a Toeplitz sequence over $\{0, 1\}$. In \cite{markley1975substitution}, Markley studied these sequences and showed the equivalence of various definitions of them. The orbit closure of a Toeplitz sequence is regarded as a Toeplitz flow.
In \cite{markley1979almost}, Markley and Paul showed that these flows were exactly almost one-to-one extensions of odometers, or the group of $p$-adic integers. See \cite{hewitt2012abstract} for a general discussion of the group theoretic properties of the group of $p$-adic integers. For a general survey of symbolic dynamics, we refer the reader to Kitchens in \cite{kitchens2012symbolic}. For a good survey on $\mathbb{Z}$ odometers and Toeplitz flows, the reader is referred to \cite{downarowicz2005survey}. Recently the definition of Toeplitz flows was extended to flows over $\mathbb{Z}^d$ by Cortez in \cite{cortez2006z}, and then to flows over general groups in \cite{cortez2008g}, and by Krieger in \cite{krieger2010toeplitz}.

The centralizer of a dynamical system is the group of all homeomorphisms of the system which commute with the group action. Sometimes called the \textit{automorphism group} of the dynamical system in the literature, the centralizer of a dynamical system has an intricate relationship with its parent dynamical system. For example, in \cite{boyle1988automorphism}, Boyle, Lind and Rudolph study the centralizer of shifts of finite type and show that they are countable, residually finite and contain the free group on two generators. Several results have been shown by Cyr and Kra (\cite{cyr2015automorphism}, \cite{cyrkra2015automorphism}, \cite{cyr2016automorphism}) which relate varying levels of complexity of symbolic dynamical systems to algebraic properties of their centralizers. We notice that systems with positive entropy tend to have very large centralizers. For example, the centralizer of the full shift contains every finite group and the free group on two generators. On the other hand, Donoso, Durand and Petite showed that some classes of low complexity symbolic dynamical systems have very small centralizers, in the sense that they consist only of powers of $T$ in \cite{donoso2016automorphism}. Bulatek and Kwiatkowski study the centralizer of a class of high complexity Toeplitz systems in \cite{bulatek1990topological} and \cite{bulatek1992strictly}. The centralizer of multidimensional symbolic dynamical systems is studied by Hochman in \cite{hochman2010automorphism}. For example, he shows that the centralizer of a positive entropy multidimensional shift of finite type contains a copy of every finite group.

The main question this paper seeks to answer is whether there are multidimensional systems with a trivial centralizer and positive entropy. Following the ideas of Bulatek and Kwiatkowski in \cite{bulatek1992strictly}, which developed this result in one dimension, we establish this result with a constructive proof. We note that there are several constructions of {\it one-dimensional} Toeplitz systems with trivial centralizers and positive entropy, see, for example, \cite{donoso2017automorphismToeplitz} and references therein.

In Section \ref{Section:Odometer} we present main facts with proofs regarding general $G$-odometers, where $G$ is a residually finite group.
For the reader's convenience, we include the proofs, otherwise scattered across multiple sources. In particular we show that the centralizer group of $\mathbb Z^d$-Toeplitz systems embeds into the centralizer group of its maximal equicontinuous factor, which is a $\mathbb Z^d$-odometer, and so is abelian. This result was originally established by Auslander in \cite[Theorem 9]{auslander1963} using the techniques of enveloping semigroups. The proof we present in this note follows the approach developed in  \cite{olli2013endomorphisms}.

In section \ref{symbolicdynamics}, we construct a class of $\mathbb{Z}^d$-Toeplitz systems that have trivial centralizers. Then in Section \ref{section:positive entropy}, we show that this class contains systems of positive entropy. In this section we provide an explicit construction of a two dimensional Toeplitz of positive entropy.

\section{Definitions and Background}

By a {\it dynamical system} we mean a pair $(X,G)$, where $X$ is a compact topological space and $G$ is a countable discrete group acting on $X$ by homeomorphisms. The action of a group element $g\in G$ on $x\in X$ will be denoted by $g\cdot x = g(x)$.  The set $\{g \cdot x: g \in G\}$ is called the orbit of the point $x$. If every orbit of $(X,G)$ is dense, we call the system \textit{minimal}. A system $(X, G)$ is \textit{equicontinuous} if for all $\epsilon > 0$, there exists $\delta > 0$, such that for all $x, y \in X$ if $d(x, y) < \epsilon$, then $d(g \cdot x, g \cdot y) < \delta$ for all $g \in G$. Let $(X, G)$, and $(Y,G)$ be two minimal systems. If there exists a continuous surjection $\pi: X \to Y$ which preserves the action of $G$, we say that $X$ is an \textit{extension} of $Y$, and that $Y$ is a \textit{factor} of $X$. We call $\pi$ a \textit{factor map}. Given two factor maps $\pi$ and $\pi'$, we say that $\pi$ is \textit{larger} than $\pi'$ if there exists a third factor map $\pi''$ such that $\pi' = \pi'' \circ \pi$. As such, we can discuss the \textit{maximal} factor of a system. It is a known fact that every dynamical system has a maximal equicontinuous factor.

In this paper we are interested in symbolic dynamical systems. We start with a finite set $\Sigma$ called the alphabet. Say $|\Sigma| = n$. The set of all bi-infinite sequences over $\Sigma$ is called the full $n$-shift and is denoted $\Sigma^{\mathbb{Z}}$. In general, we denote the full $d$ dimensional $n$-shift by $\Sigma^{\mathbb{Z}^d}$. This set is endowed with the product topology from the discrete topology in each coordinate. Cylinder sets in which we fix a finite number of coordinates form a basis for the topology. For $x \in \Sigma^{\mathbb{Z}^d}$ we write $x = \{x(v)\}_{v \in \mathbb{Z}^d}$. We call $x$ a $\mathbb{Z}^d$ array. The group $\mathbb{Z}^d$ acts on $\Sigma^{\mathbb{Z}^d}$, denoted by $T^z(x)$ for $z \in \mathbb{Z}^d$ and $x \in \Sigma^{\mathbb{Z}^d}$ as follows: $T^z(x) = \{x(z + v)\}_{v \in \mathbb{Z}^d}$. The orbit of an array is $\{T^v(x): v \in \mathbb{Z}^d\}$. A subset $X \subseteq  \Sigma^{\mathbb{Z}^d}$ is called a \textit{subshift} if it is closed under the action of $\mathbb{Z}^d$.

For the sake of completeness, we note that symbolic dynamics can be studied over general, discrete groups. In this case, let $G$ be a discrete group. Then $\Sigma^G$ is acted on by the group $G$. While in this paper we restrict our study of symbolic dynamics to $\mathbb{Z}^d$ systems, we note that many of the results can be extended to $G$ systems for more general groups $G$.

The topological spaces discussed in this note will be topological zero-dimensional compact metric space without isolated points, i.e. a Cantor set. Notice that by a theorem of Brouwer (\cite{brouwer1910}) every Cantor set is homeomorphic to the middle-thirds Cantor set, and so all Cantor sets are homeomorphic.

\section{Odometers}\label{Section:Odometer}

In this section, we will recall some basic facts about odometers and their almost $1-1$ extensions. In particular, we show that the centralizer of an odometer is abelian, and the centralizer of the almost $1-1$ extension of an odometer is also abelian. These results are mostly known, but are scattered. In particular, the proof of Lemma \ref{proximal} appears in \cite{veech1970point} and the proof of Proposition \ref{onetoone} appears in \cite{olli2013endomorphisms}. We present slightly modified proofs for clarity and the reader's convenience.

\begin{definition}
A group $G$ is called \textit{residually finite} if the intersection of all its finite index normal subgroups is normal.
\end{definition}

\begin{definition}
Let $G$ be a residually finite group, and $G = G_0 \supseteq G_1 \supseteq G_2 \supseteq \ldots$ be nested normal subgroups such that $\bigcap G_n = \{0\}$. Let $\pi_n$ be the natural homomorphism from $G/G_n$ onto $G/G_{n-1}$, i.e. $\pi_n(hG_n) = hG_{n-1}$ for $h \in G$. The \textit{G-odometer}, $\overline{G}$, is the inverse limit
\[
\overline{G} = \varprojlim (G/G_i ; \pi_i) = \{(g_k)_{k=0}^{\infty} \in \prod_{k=0}^{\infty} G/G_k \text{ } | \text{ } \pi_n(g_n) = g_{n-1} \text{ for all } n \geq 1\}
\]

\end{definition}

An element $g \in G$ acts on an element $y = (y_i)_{i = 0}^{\infty} \in \overline{G} $ as $g \cdot y = (g \cdot y_i)_{i=0}^{\infty}$.

First we prove that $G$ embeds into $\overline{G}$.

\begin{lemma}\label{embed}
Let $\phi: G \to \overline{G}$ be defined as $g \mapsto (gG_1, gG_2, \ldots)$. Then $\phi$ is an embedding.
\end{lemma}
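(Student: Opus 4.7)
The plan is to verify the three standard requirements for $\phi$ to be an embedding: that $\phi(g)$ genuinely lies in the inverse limit $\overline{G}$, that $\phi$ respects the group operation, and that $\phi$ is injective. The first two are essentially bookkeeping with cosets, and the injectivity step is exactly where the residual-finiteness hypothesis $\bigcap_n G_n = \{e\}$ is used.

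First I would check well-definedness: for each $g \in G$ and each $n \geq 1$ we have $\pi_n(gG_n) = gG_{n-1}$ by the very definition of $\pi_n$, so the sequence $(gG_n)_{n \geq 1}$ satisfies the compatibility condition required to belong to $\overline{G} \subseteq \prod_{k \geq 0} G/G_k$. (If one wishes to include a $k=0$ coordinate, note that $G_0 = G$ makes $G/G_0$ trivial, so the coordinate is forced.) Next I would verify that $\phi$ is a homomorphism. Since each $G_n$ is normal in $G$, multiplication on $G/G_n$ is well defined and $(gh)G_n = (gG_n)(hG_n)$; because multiplication in $\overline{G}$ is performed coordinate-wise, this gives $\phi(gh) = \phi(g)\phi(h)$ immediately.

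For injectivity, suppose $\phi(g) = \phi(h)$. Then $gG_n = hG_n$ in $G/G_n$ for every $n \geq 1$, which is equivalent to $g^{-1}h \in G_n$ for every $n$. Hence $g^{-1}h \in \bigcap_{n \geq 1} G_n = \{e\}$, so $g = h$. This is the only place the residual-finiteness assumption enters, and it is the only real content of the lemma.

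The main obstacle is not conceptual but notational: one must be careful that the subgroups $G_n$ are normal so that the quotients $G/G_n$ are groups and $\phi$ is genuinely a group homomorphism, and that $\pi_n$ as defined is compatible with the indexing of the inverse limit. Once these conventions are aligned, the proof is essentially a one-line consequence of $\bigcap G_n = \{e\}$.
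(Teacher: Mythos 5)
Your proposal is correct and the injectivity argument --- reducing $\phi(g)=\phi(h)$ to $g^{-1}h\in\bigcap_n G_n=\{e\}$ --- is exactly the paper's proof. The paper only verifies injectivity, so your additional checks of well-definedness and the homomorphism property make the argument slightly more complete without changing the approach.
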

\begin{proof}
Let $g_1, g_2 \in G$. Suppose
\[
\phi(g_1) = (g_1G_1, g_1G_2, g_1G_3, \ldots) = (g_2G_1, g_2G_2, g_2G_3, \ldots) = \phi(g_2)
\]
So $g_1G_i = g_2G_i$ for all $i$. Therefore $g_1^{-1}g_2 \in G_i$ for all $i$, and so $g_1^{-1}g_2 \in \bigcap G_i = \{0\}$. So $g_1 = g_2$.
\end{proof}

So we have shown that $G$ embeds into $\overline{G}$ in a natural way. We now prove that $(\overline{G},G)$ is minimal.

\begin{lemma}\label{minimal}
The system $(\overline{G},G)$ is minimal.
\end{lemma}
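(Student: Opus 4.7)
The plan is to verify directly that every $G$-orbit in $\overline{G}$ is dense, which is equivalent to minimality. First I would identify a convenient basic for the inverse-limit topology on $\overline{G}$. Since each $G/G_n$ is discrete, the defining compatibility condition $\pi_n(z_n) = z_{n-1}$ forces $z_0, \ldots, z_{n-1}$ to be determined by $z_n$. Consequently every basic open set of $\overline{G}$ has the form
\[
U_{n,h} = \{z = (z_k)_{k=0}^{\infty} \in \overline{G} : z_n = h\}
\]
for some $n \geq 0$ and some coset $h \in G/G_n$; these sets are nonempty because $\overline{G}$ surjects onto each $G/G_n$.

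Next I would fix an arbitrary point $y = (y_i) \in \overline{G}$ and an arbitrary basic open set $U_{n,h}$, and exhibit $g \in G$ such that $g \cdot y \in U_{n,h}$. Choosing representatives $a \in y_n$ and $b \in h$ in $G$, the element $g := b a^{-1}$ satisfies
\[
g \cdot y_n = (ba^{-1})(a G_n) = b G_n = h,
\]
so $g \cdot y \in U_{n,h}$. Since $y$ and $U_{n,h}$ were arbitrary, the orbit of every point meets every nonempty open set, and thus $\overline{G}$ is minimal.

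The argument is essentially a definition chase; there is no substantive obstacle. The only point that merits a line of verification is that the action is well defined on $\overline{G}$, i.e., that $g \cdot y$ remains coherent under the bonding maps. This follows because $\pi_n(g \cdot y_n) = \pi_n(g y_n) = g y_{n-1} = g \cdot y_{n-1}$, and left multiplication by $g$ is unambiguously defined on left cosets of $G_n$. If one prefers, the same conclusion can be packaged as follows: the orbit of the identity element $\phi(e) = (G_1, G_2, \ldots)$ equals $\phi(G)$, which by construction hits every $U_{n,h}$, and then minimality is inherited by every other orbit because the action of $G$ extends to the ambient compact group $\overline{G}$ by translations, making the system equicontinuous with one dense orbit.
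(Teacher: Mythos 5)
Your proof is correct. The core computation is the same as the paper's (left-multiplying by a group element to force agreement of the $n$-th coordinate, which automatically forces agreement of all earlier coordinates by coherence), but the overall structure is genuinely more direct. The paper argues in two steps: first that the identity $e=(G_1,G_2,\ldots)$ has dense orbit, by showing $\overline{y_n}\cdot e$ agrees with $y$ in the first $n$ coordinates; then that every other point $b$ has dense orbit, by taking $b_n\cdot e\to ab^{-1}$ and multiplying on the right by $b$ --- a homogeneity argument that implicitly uses the group structure and continuity of multiplication on $\overline{G}$. You instead solve $g\cdot y_n=h$ directly for an arbitrary point $y$ and arbitrary basic open set $U_{n,h}$ via $g=ba^{-1}$, which handles all orbits in one stroke and never needs the transfer step or the continuity of right translation. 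Your version is slightly cleaner and works verbatim for nonabelian $G$; the paper's version isolates the density of the orbit of $e$, which it reuses later (e.g.\ in Lemma \ref{translation}), so the two-step structure is not wasted there. Your identification of the basic open sets and your remark on well-definedness of the coordinatewise action are both correct and appropriately justified.
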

\begin{proof}
Consider the identity element, $e \in \overline{G}$. In particular, $e = (G_1, G_2, G_3, \ldots)$. Let $y = (y_i)_{i=0}^{\infty} \in \overline{G}$. So, for each $n$, we have $y_n = \overline{y_n} G_n$, where $\overline{y_n} \in G$ is a representative of the coset. Note \begin{align*}
\overline{y_n} \cdot e &= \overline{y_n} (G_1, G_2, G_3, \ldots, G_n, \ldots) \\
&= (\overline{y_n}G_1, \overline{y_n}G_2, \overline{y_n}G_3, \ldots, \overline{y_n}G_n, \ldots) \\
&= (\overline{y_1} G_1, \overline{y_2}G_2, \ldots, \overline{y_n} G_n, \ldots) = (y_1, y_2, \ldots, y_n, \ldots)
\end{align*}

So $e \cdot \overline{y_n}$ agrees with $y$ in the first $n$ coordinates. And so we can get arbitrarily close to $y$ as we increase $n$. Hence $e$ has a dense orbit.

Now let $a, b \in \overline{G}$. Note we can find a sequence $b_n$ such that $b_n \cdot e \to ab^{-1}$, since $e$ has a dense orbit. Then $(b_n \cdot e) \cdot b \to a$ so $b_n \cdot b \to a$. Therefore $b$ has dense orbit.
\end{proof}

\begin{definition}[Centralizer]
Let $(X,G)$ be a dynamical system. The \textit{centralizer}, $C(G)$ is defined as
\[
C(G) = \{\varphi \in \text{Homeo}(X) \text{ }|\text{ }g\varphi = \varphi g\text{ for all }g \in G\}
\]
\end{definition}

That is, the centralizer of a system consists of all homeomorphisms of the system which commute with the group action. It can be checked that this is a group under composition.

Next we show that elements of the centralizer of an odometer act as translations of the odometer.

\begin{lemma}\label{translation}
Let $\varphi \in C(\overline{G},G)$. There exists $g_0 \in \overline{G}$ such that $\varphi(x) = x \cdot g_0$ for all $x \in \overline{G}$.
\end{lemma}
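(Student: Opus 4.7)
The plan is to show that $\varphi$ is completely determined by its value at the identity $e \in \overline{G}$, and that this determination is precisely right translation by $g_0 := \varphi(e)$.

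First I would exploit the structure of $\overline{G}$ as a topological group. The inverse limit $\overline{G}$ inherits pointwise multiplication from the quotients $G/G_n$, and this multiplication is continuous, so in particular right translation $R_{g_0}: x \mapsto x \cdot g_0$ is a homeomorphism of $\overline{G}$. The embedding $\phi: G \hookrightarrow \overline{G}$ from Lemma \ref{embed} sends $g$ to $(gG_1, gG_2, \ldots)$, and one checks directly from the definition of the action that $g \cdot e = \phi(g)$, so the embedded copy of $G$ is exactly the orbit of $e$. Hence by (the proof of) Lemma \ref{minimal}, $\phi(G)$ is dense in $\overline{G}$.

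Next, set $g_0 := \varphi(e)$. For any $g \in G$, the commutation relation $\varphi \circ g = g \circ \varphi$ together with $g \cdot e = \phi(g)$ gives
\[
\varphi(\phi(g)) = \varphi(g \cdot e) = g \cdot \varphi(e) = g \cdot g_0 = \phi(g) \cdot g_0.
\]
Thus $\varphi$ and $R_{g_0}$ agree on the dense set $\phi(G) \subset \overline{G}$. Since both maps are continuous (the former by hypothesis, the latter by continuity of the group operation), they must coincide on all of $\overline{G}$, establishing $\varphi(x) = x \cdot g_0$ as claimed.

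The only real subtlety I anticipate is the bookkeeping between the action of $G$ on $\overline{G}$ and the group multiplication in $\overline{G}$: one must verify that the $G$-action and left multiplication by $\phi(g)$ coincide, so that the intertwining property of $\varphi$ translates into an identity about group multiplication. Once this identification is made explicit, the argument reduces to the standard principle that a continuous equivariant self-map of a minimal group rotation is determined by its value at a single point.
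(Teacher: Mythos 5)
Your proof is correct and takes essentially the same route as the paper: both use the commutation relation to show $\varphi$ agrees with right translation by $\varphi(e)$ on the dense orbit of $e$, and then extend by continuity. The only cosmetic difference is that the paper runs the continuity argument pointwise with a sequence $g_n \cdot e \to x$, whereas you phrase it as two continuous maps agreeing on a dense set; your version makes the implicit use of continuity of the group multiplication slightly more explicit.
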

\begin{proof}
Let $x \in \overline{G}$. Since the orbit of $e$ is dense, by Lemma \ref{minimal}, there exists a sequence $\{g_n\} \subseteq G$ such that $g_n \cdot e \to x$. Since $\varphi$ is continuous, $\varphi(g_n \cdot e) \to \varphi(x)$. But $\varphi(g_n \cdot e) = g_n \cdot \varphi(e)$ for all $n$. Since $g_n \cdot e \to x$, we have $g_n \to x$. So $\varphi(g_n \cdot e) \to x \cdot \varphi(e)$. Therefore $\varphi(x) = x \cdot \varphi(e)$.
\end{proof}

We are now ready to prove the following Proposition. In the following, $G$ is an abelian group.

\begin{proposition}\label{centralizer}
The centralizer $C(\overline{G},G) = \{\varphi : \overline{G} \to \overline{G}\text{  } |\text{  } \varphi g = g \varphi \text{  } \forall g \in G\}$ of an odometer $\overline{G}$ is isomorphic to $\overline{G}$.
\end{proposition}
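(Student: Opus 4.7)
The plan is to define $\Psi: C(\overline{G}, G) \to \overline{G}$ by $\Psi(\varphi) = \varphi(e)$, where $e = (G_1, G_2, \ldots) \in \overline{G}$ is the identity element, and to verify that $\Psi$ is a group isomorphism. Most of the work is already packaged in Lemma \ref{translation}, which identifies every centralizer element with right translation by some fixed element of $\overline{G}$. In particular, injectivity is immediate: if $\varphi(e) = \psi(e)$, then for every $x \in \overline{G}$ we have $\varphi(x) = x \cdot \varphi(e) = x \cdot \psi(e) = \psi(x)$.

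For the homomorphism property, I would compute $\Psi(\varphi \circ \psi) = \varphi(\psi(e)) = \psi(e) \cdot \varphi(e)$ using Lemma \ref{translation} applied to $\varphi$, and compare this with $\Psi(\varphi) \cdot \Psi(\psi) = \varphi(e) \cdot \psi(e)$. These agree exactly because $\overline{G}$ is abelian, which follows from $G$ being abelian (each $G/G_n$ is abelian, and the inverse limit inherits this). This is the single place where the abelian hypothesis is essential; without it, $\Psi$ would only be an anti-isomorphism.

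For surjectivity, given $h \in \overline{G}$, I set $\varphi_h(x) = x \cdot h$. Since the group operation on $\overline{G}$ is continuous in the product topology (continuity holds coordinatewise on each discrete quotient $G/G_n$), right translation by $h$ is a homeomorphism with inverse $\varphi_{h^{-1}}$. Commutation with the $G$-action $g \cdot x = \phi(g) \cdot x$ is just associativity of multiplication in $\overline{G}$: $\varphi_h(g \cdot x) = \phi(g) \cdot x \cdot h = g \cdot \varphi_h(x)$. Therefore $\varphi_h \in C(\overline{G}, G)$ and $\Psi(\varphi_h) = h$.

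There is no genuine obstacle beyond Lemma \ref{translation}; the only subtlety is remembering to invoke the abelian hypothesis when upgrading $\Psi$ from an anti-homomorphism to a homomorphism, and to note that $\overline{G}$ is a topological group (so right translations are automatically homeomorphisms) when verifying surjectivity.
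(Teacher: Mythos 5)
Your proposal is correct and follows essentially the same route as the paper: define $\Psi(\varphi) = \varphi(e)$, use Lemma \ref{translation} for injectivity and the homomorphism property (where commutativity of $\overline{G}$ is invoked exactly as you note), and exhibit right translations for surjectivity. Your added remarks that translations are homeomorphisms and that abelianness is the crux of upgrading an anti-homomorphism to a homomorphism are slightly more explicit than the paper's write-up, but the argument is the same.
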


\begin{proof}
Define $\psi: C(\overline{G},G) \to \overline{G}$ as $\psi(\varphi) = \varphi(e)$ for all $\varphi \in C(\overline{G},G)$. Let $\varphi_1, \varphi_2 \in C(\overline{G},G)$. Then \begin{align*}
\psi(\varphi_1 \circ \varphi_2) &= \varphi_1 \circ \varphi_2(e)\\
&= \varphi_1(\varphi_2(e))\\
&= \varphi_2(e) \cdot \varphi_1(e)\\
&= \varphi_1(e) \cdot \varphi_2(e)\\
&= \psi(\varphi_1)\psi(\varphi_2)
\end{align*}
So $\psi$ is a homomorphism. Let $y \in \overline{G}$. Let $\varphi_y(x) = x \cdot y$ for all $x \in \overline{G}$. Note, for $g \in \overline{G}$, we have $\varphi_y(gx) = g \varphi_y(x)$ so $\varphi_y \in C(\overline{G},G)$. Also, $\psi(\varphi_y) = y$, so $\psi$ is onto.
Suppose $\psi(\varphi_1) = \psi(\varphi_2)$. Then $\varphi_1(e) = \varphi_2(e)$. So for any $x \in \overline{G}$, $\varphi_1(x) = x \cdot \varphi_1(e) = x \cdot \varphi_2(e) = \varphi_2(x)$. Therefore $\psi$ is an isomorphism.
\end{proof}

We now turn our attention to almost $1-1$ extensions of odometers.

\begin{definition}\label{Toeplitz}
We say $(X,G)$ is an \textit{almost $1-1$ extension} of $(Y,G)$ if there is a factor map $\pi : X \to Y$ such that there is at least one $y \in Y$ so that $\pi^{-1}y$ is singleton. Almost $1-1$ extensions of odometers are also called \textit{Toeplitz Systems}.
\end{definition}

We make use of the following commutative diagram:
\[
\begin{CD}
X @>G>> X\\
@V\pi VV @VV \pi V \\
Y @>>G> Y
\end{CD}
\]

Sometimes the context will deem the action of $G$ on $X$ or $Y$ ambiguous, so we will use $T^gx$ to denote the action of the group element $g \in G$ on $x\in X$ and $S^gy$ to denote the action of $g$ on $y \in Y$. In particular, $\pi \circ T^g = S^g \circ \pi$. If the context is clear, the action of $g$ on a point $x$ will be denoted $g \cdot x$.

If $(X,G)$ is a minimal almost $1-1$ extension of a minimal equicontinuous system, $(Y,G)$, then it is known that $(Y,G)$ is the maximal equicontinuous factor of $(X,G)$ (\cite{auslander1988minimal}). As such, the odometer of which a Toeplitz system $(X,G)$ is an almost $1-1$ extension is its maximal equicontinuous factor.

We will be considering almost $1-1$ extensions of $\mathbb{Z}^d$-odometers. In this context, we will the following proposition.

\begin{proposition}\label{abeliantoeplitz}
The centralizer $C(G)$ of the almost $1-1$ extension of a $\mathbb{Z}^d$-odometer is abelian.
\end{proposition}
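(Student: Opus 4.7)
The plan is to produce an injective group homomorphism $\Phi : C(X,\mathbb{Z}^d) \to C(\overline{G},\mathbb{Z}^d)$ and appeal to Proposition \ref{centralizer}, which identifies the codomain with the abelian group $\overline{G}$.

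First, fix once and for all a point $x_0 \in X$ with $\pi^{-1}(\pi(x_0)) = \{x_0\}$ (available by the almost $1$-$1$ hypothesis in Definition \ref{Toeplitz}), and let $y_0 = \pi(x_0)$. Given $\varphi \in C(X,\mathbb{Z}^d)$, set $z_0 = \pi(\varphi(x_0))$ and attempt to define $\tilde{\varphi}$ on the $\mathbb{Z}^d$-orbit of $y_0$ by $\tilde{\varphi}(S^g y_0) := S^g z_0$. The key consistency check goes as follows: if $S^g y_0 = S^h y_0$, then $T^{g-h}x_0 \in \pi^{-1}(y_0) = \{x_0\}$, hence $T^{g-h}x_0 = x_0$, which forces $S^{g-h}z_0 = \pi(\varphi(T^{g-h}x_0)) = \pi(\varphi(x_0)) = z_0$. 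So the rule is well-defined on the (dense) orbit of $y_0$.

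Next, extend $\tilde{\varphi}$ to all of $\overline{G}$. Writing the action of $\overline{G}$ on itself as group multiplication, the formula above reads $\tilde{\varphi}(g \cdot y_0) = g \cdot z_0 = (g\cdot y_0)\cdot y_0^{-1}\cdot z_0$, so $\tilde{\varphi}$ is the restriction to the orbit of $y_0$ of the continuous translation $y \mapsto y\cdot y_0^{-1}\cdot z_0$ of $\overline{G}$. This extension, still denoted $\tilde{\varphi}$, is a homeomorphism of $\overline{G}$ commuting with the $\mathbb{Z}^d$-action, so $\tilde{\varphi}\in C(\overline{G},\mathbb{Z}^d)$. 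By construction $\pi\circ\varphi$ and $\tilde{\varphi}\circ\pi$ agree on the orbit of $x_0$---both send $T^g x_0$ to $S^g z_0$---and therefore they agree on all of $X$ by continuity and density.

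Define $\Phi(\varphi) = \tilde{\varphi}$. The intertwining relation $\pi\circ\varphi = \tilde{\varphi}\circ\pi$, applied to a composition $\varphi_1\varphi_2$, combined with surjectivity of $\pi$, shows $\Phi$ is a group homomorphism. For injectivity: if $\tilde{\varphi} = \mathrm{id}_{\overline{G}}$, then $\pi\circ\varphi = \pi$, so $\varphi$ preserves every fiber of $\pi$, and in particular $\varphi(x_0)\in \pi^{-1}(y_0) = \{x_0\}$. Equivariance then gives $\varphi(T^g x_0) = T^g x_0$ for every $g \in \mathbb{Z}^d$, so $\varphi$ fixes the dense orbit of $x_0$, and continuity yields $\varphi = \mathrm{id}_X$. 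Since $C(\overline{G},\mathbb{Z}^d) \cong \overline{G}$ is abelian by Proposition \ref{centralizer}, the embedded subgroup $C(X,\mathbb{Z}^d)$ is abelian as well. The main obstacle is the extension step: one has to recognize that the assignment on the distinguished orbit is the restriction of a globally continuous self-map of $\overline{G}$, and this is exactly where the equicontinuity of the odometer (equivalently, the compact abelian topological group structure on $\overline{G}$) is used in an essential way; the homomorphism property, injectivity, and the final commutativity conclusion are routine once the embedding is in place.
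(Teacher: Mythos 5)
Your proof is correct, but it takes a genuinely different route from the paper. The paper's argument (Lemma \ref{proximal} together with Proposition \ref{onetoone}) is structural: it first characterizes the fibers of $\pi$ as the proximal classes, observes that any $\varphi\in C(X,G)$ preserves proximality and hence preserves fibers, and only then defines the induced map globally as $\psi_\varphi=\pi\circ\varphi\circ\pi^{-1}$. You instead bypass proximality entirely: you build the induced map only on the distinguished orbit of the singleton-fiber point $y_0$, recognize it there as the right translation $y\mapsto y\cdot y_0^{-1}z_0$, and extend by continuity using the compact group structure of $\overline{G}$; well-definedness and injectivity are then read off directly from the fact that $\pi^{-1}(y_0)$ is a singleton. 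Each approach buys something: the paper's proximality argument shows fiber-preservation for \emph{every} fiber and would work verbatim for almost $1$-$1$ extensions over any factor without proximal pairs, without invoking a group structure on the base until the final appeal to Proposition \ref{centralizer}; your argument is shorter and more self-contained for odometers, but leans essentially on $\overline{G}$ being a topological group (so that a map defined on one dense orbit is forced to be a translation), which is really the content of Lemma \ref{translation} relocated into the construction of the embedding. One small remark: your consistency check for $S^gy_0=S^hy_0$ is harmless but vacuous here, since the $\mathbb{Z}^d$-action on $\overline{G}$ is free when $\bigcap G_n=\{0\}$; the step that actually carries weight is the density of the orbit of $y_0$ guaranteeing uniqueness of the continuous extension.
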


To prove Proposition \ref{abeliantoeplitz}, we show that the centralizer of the almost $1-1$ extension of an odometer embeds into the centralizer of its maximal equicontinuous factor, which we have already shown to be isomorphic to the odometer, which is abelian in the case of $G = \mathbb{Z}^d$.

\begin{definition} [\cite{veech1970point}]
Given a dynamical system $(X,G)$ and a metric $d$ compatible with the topology on $X$, two points $x_1$, $x_2 \in X$ are called \textit{proximal} if \[
\inf_{g \in G} d(g \cdot x_1,g \cdot x_2) = 0
\]
\end{definition}

\begin{lemma}\label{proximal}
Let $(X,G)$ be an almost $1-1$ extension of an odometer $(\overline{G},G)$ via the factor map $\pi$. Then points of $X$ are proximal if and only if they are in the same $\pi$ fiber.
\end{lemma}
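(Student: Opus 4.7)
The plan is to prove the two directions separately. The forward direction (proximal implies same fiber) will follow from equicontinuity of the odometer; the reverse direction (same fiber implies proximal) will use the hypothesis that $\pi$ is almost $1$-$1$ together with minimality of $(\overline{G},G)$ established in Lemma \ref{minimal}.

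For the forward direction, suppose $x_1,x_2\in X$ are proximal, so there is a sequence $\{g_n\}\subseteq G$ with $d(g_n\cdot x_1,g_n\cdot x_2)\to 0$. Since $X$ is compact metric, $\pi$ is uniformly continuous, hence
\[
d\bigl(g_n\cdot\pi(x_1),\,g_n\cdot\pi(x_2)\bigr)=d\bigl(\pi(g_n\cdot x_1),\,\pi(g_n\cdot x_2)\bigr)\longrightarrow 0,
\]
so $\pi(x_1)$ and $\pi(x_2)$ are proximal in the odometer. I would then invoke equicontinuity of $(\overline{G},G)$: given $\epsilon>0$, choose $\delta$ from the equicontinuity condition, find $n$ with $d(g_n\cdot\pi(x_1),g_n\cdot\pi(x_2))<\delta$, and apply $g_n^{-1}$ to obtain $d(\pi(x_1),\pi(x_2))<\epsilon$. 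Since $\epsilon$ was arbitrary, $\pi(x_1)=\pi(x_2)$.

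For the reverse direction, fix via the almost $1$-$1$ hypothesis a point $y_0\in\overline{G}$ whose fiber is a singleton $\pi^{-1}(y_0)=\{x_0\}$. Given $x_1,x_2\in\pi^{-1}(y)$ in the same fiber over $y$, I would use minimality of $(\overline{G},G)$ (Lemma \ref{minimal}) to select a sequence $\{g_n\}\subseteq G$ with $g_n\cdot y\to y_0$. Passing to a subsequence using compactness of $X$, I may assume $g_n\cdot x_1\to z_1$ and $g_n\cdot x_2\to z_2$. Continuity of $\pi$ then yields $\pi(z_1)=\pi(z_2)=y_0$, and singleton-ness of the fiber forces $z_1=z_2=x_0$. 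Consequently $d(g_n\cdot x_1,g_n\cdot x_2)\to 0$, and $x_1$ and $x_2$ are proximal.

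The only subtle step is the forward direction, where one must verify that proximal points in the equicontinuous factor must coincide; this is classical but relies on equicontinuity giving a uniformly $G$-invariant bound. Everything else is a routine compactness-and-continuity argument, and I do not expect any significant obstacle.
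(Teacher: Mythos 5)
Your proof is correct and follows essentially the same route as the paper: the same-fiber-implies-proximal direction uses the singleton fiber, minimality, and compactness exactly as in the paper, and the converse reduces to the fact that the odometer has no nontrivial proximal pairs. The only difference is cosmetic — you justify distality of the odometer via equicontinuity, whereas the paper simply asserts that $\overline{G}$ has no proximal points.
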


\begin{proof}
Let $x_1,x_2 \in X$ be in the same $\pi$ fiber, i.e. $\pi (x_1) = \pi (x_2)$. Let $y \in \overline{G}$ be such that $\pi^{-1}y$ is a singleton. Since $(\overline{G},G)$ is minimal, there exists a sequence $\{g_n\}$ such that $\lim_{n \to \infty} S^{g_n} \pi x_1 = y$ and so $\lim_{n \to \infty} S^{g_n} \pi x_2 = y$. Since $X$ is compact, there is a subsequence $\{T^{g_{n_k}}\}$ such that $T^{g_{n_k}}x_1$ converges. Suppose $\lim_{n \to \infty} T^{g_{n_k}}x_1 = z$. Applying $\pi$, we have \begin{align*}
\lim_{n \to \infty} \pi T^{g_{n_k}}x_1 &= \pi z \\
\lim_{n \to \infty} S^{g_{n_k}} \pi x_1 &= \pi z = y
\end{align*}
So we also have $\lim_{n \to \infty} S^{g_{n_k}} \pi x_2 = \pi z = y$.
Since $\pi^{-1}y$ is a singleton, $z$ is unique.
Now, $d(g_n \cdot x_1,g_n \cdot x_2) \leq d(g_n \cdot x_1,z) + d(z,g_n \cdot x_2)$. So,
\begin{align*}
\limsup_{g \in G} d(g_n \cdot x_1,g_n \cdot x_2) &\leq \limsup_{g \in G}( d(g_n \cdot x_1,z) + d(z,g_n \cdot x_2))\\
&\leq \limsup_{g \in G} d(g_n \cdot x_1,z) +\limsup_{g \in G} d(z,g_n \cdot x_2) = 0
\end{align*}
So the points $x_1$ and $x_2$ are proximal.

Now suppose $x_1,x_2 \in X$ are proximal. Assume that $\pi x_1 \neq \pi x_2$, i.e. they are not in the same $\pi$ fiber. Since $x_1, x_2$ are proximal, there is a sequence $\{g_n\} \subseteq G$ such that $\lim_{n \to \infty} T^{g_n} x_1 = \lim_{n \to \infty} T^{g_n} x_2 = z$. Applying $\pi$, we have $\lim_{n \to \infty} \pi T^{g_n} x_1 = \lim_{n \to \infty} \pi T^{g_n} x_2 = \pi z$. So $\lim_{n \to \infty} S^{g_n} \pi x_1 = \lim_{n \to \infty} S^{g_n} \pi x_2$ which implies $\pi x_1, \pi x_2 \in \overline{G}$ are proximal. But $\overline{G}$ has no proximal points, so $\pi x_1 = \pi x_2$.
\end{proof}

Finally, we prove that the centralizer of $X$ embeds into the centralizer of the odometer $Y$.

\begin{proposition}\label{onetoone}
Let $(X,G)$ be an almost $1-1$ extension of a $G-$odometer $(Y,G)$. Every element $\varphi \in C(X,G)$ determines $\psi_{\varphi} \in C(Y,G)$ such that the following diagram commutes:
\[
\begin{CD}
X @>\varphi>> X\\
@V\pi VV @VV \pi V \\
Y @>>\psi_{\varphi}> Y
\end{CD}
\]
Additionally, this relationship is an embedding, i.e. $\psi_{\varphi_1} = \psi_{\varphi_2} \Rightarrow \varphi_1 = \varphi_2$.
\end{proposition}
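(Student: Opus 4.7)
The plan is to use Lemma \ref{proximal} to transfer $\varphi$ down to $Y$ via the prescription $\psi_\varphi(y) := \pi(\varphi(x))$ for any $x \in \pi^{-1}(y)$, then verify the diagram commutes, continuity, the $G$-equivariance, and finally injectivity of $\varphi\mapsto\psi_\varphi$.

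First I would argue well-definedness, which is the heart of the matter. Given $y\in Y$ and two preimages $x_1,x_2\in\pi^{-1}(y)$, Lemma \ref{proximal} says $x_1$ and $x_2$ are proximal, so choose $g_n\in G$ with $d(T^{g_n}x_1,T^{g_n}x_2)\to 0$. Passing to a subsequence, compactness gives $T^{g_{n_k}}x_1\to z$ and hence also $T^{g_{n_k}}x_2\to z$. Since $\varphi$ is a homeomorphism commuting with $G$, $T^{g_{n_k}}\varphi(x_i)=\varphi(T^{g_{n_k}}x_i)\to \varphi(z)$ for $i=1,2$, so $\varphi(x_1)$ and $\varphi(x_2)$ are proximal in $X$. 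Lemma \ref{proximal} then forces $\pi(\varphi(x_1))=\pi(\varphi(x_2))$, so $\psi_\varphi(y)$ is well-defined. By construction $\pi\circ\varphi=\psi_\varphi\circ\pi$.

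Next I would check the remaining properties of $\psi_\varphi$. $G$-equivariance is immediate: picking $x\in\pi^{-1}(y)$ gives $\psi_\varphi(S^gy)=\pi(\varphi(T^gx))=\pi(T^g\varphi(x))=S^g\psi_\varphi(y)$. For continuity, take $y_n\to y$, choose $x_n\in\pi^{-1}(y_n)$, and use compactness of $X$: any subsequence has a further subsequence $x_{n_k}\to x$ with $\pi(x)=y$, so $\psi_\varphi(y_{n_k})=\pi(\varphi(x_{n_k}))\to\pi(\varphi(x))=\psi_\varphi(y)$; a standard subsequence argument then gives $\psi_\varphi(y_n)\to\psi_\varphi(y)$. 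Applying the same construction to $\varphi^{-1}$ produces an inverse to $\psi_\varphi$, so $\psi_\varphi\in C(Y,G)$.

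Finally I would prove injectivity of the assignment $\varphi\mapsto\psi_\varphi$. Suppose $\psi_{\varphi_1}=\psi_{\varphi_2}$, and set $\eta:=\varphi_2^{-1}\circ\varphi_1\in C(X,G)$; then $\pi\circ\eta=\pi$, so $\eta$ sends each $\pi$-fiber into itself. Pick $y_0\in Y$ with $\pi^{-1}(y_0)=\{x_0\}$ (such a $y_0$ exists by Definition \ref{Toeplitz}). Then $\eta(x_0)=x_0$, and $G$-equivariance gives $\eta(T^gx_0)=T^gx_0$ for every $g\in G$. By minimality of $(X,G)$ (which follows from minimality of $(\overline{G},G)$ and the almost 1-1 extension property), the orbit of $x_0$ is dense, so continuity of $\eta$ forces $\eta=\mathrm{id}$, i.e.\ $\varphi_1=\varphi_2$. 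The main subtlety throughout is the well-definedness step, where the proximality characterization of fibers is doing all the work; everything else is bookkeeping with minimality, compactness, and the commuting diagram.
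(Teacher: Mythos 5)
Your proposal is correct and follows essentially the same route as the paper: use Lemma \ref{proximal} to show $\varphi$ preserves $\pi$-fibers (hence descends to a well-defined $\psi_\varphi$ on $Y$), and use a singleton fiber together with equivariance and minimality to get injectivity of $\varphi\mapsto\psi_\varphi$. Your packaging of the injectivity step via $\eta=\varphi_2^{-1}\circ\varphi_1$ and your subsequence argument for why proximality is preserved are minor (and if anything slightly more careful) variants of what the paper does.
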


\begin{proof}
Let $\varphi \in C(X,G)$. Let $x_1, x_2 \in X$ be proximal. So $\pi x_1 = \pi x_2$. Since $x_1$ and $x_2$ are proximal, $\inf_{g \in G} d(g \cdot x_1, g \cdot x_2) = 0$. Thus $\inf_{g \in G} d(\varphi (g \cdot x_1), \varphi (g \cdot x_2)) = 0$ which, by Lemma \ref{proximal}, implies that $\varphi (x_1), \varphi (x_2)$ are proximal. So $\varphi$ preserves the proximal relationship, and so it preserves $\pi$ fibers. Define $\psi_{\varphi}: Y \to Y$ as $\psi_{\varphi} = \pi \circ \varphi \circ \pi^{-1}$. This map is well defined because $\varphi$ preserves the $\pi$ fibers. Suppose $\psi_{\varphi}(y_1) = \psi_{\varphi}(y_2)$ for $y_1, y_2 \in Y$. So $\pi \circ \varphi \circ \pi^{-1} (y_1) = \pi \circ \varphi \circ \pi^{-1} (y_2)$, and so $\varphi \circ \pi^{-1} (y_1)$ and $\varphi \circ \pi^{-1} (y_2)$ are  in the same $\pi$ fibers. Since $\varphi$ preserves the $\pi$ fibers, $\pi^{-1} (y_1)$ and $\pi^{-1} (y_2)$ are in the same $\pi$ fibers, and so it is clear that $y_1 = y_2$. Therefore $\psi_{\varphi}$ is $1-1$. Also, $\psi_{\varphi}$ is continuous, so it is a homeomorphism, i.e. $\psi_{\varphi} \in C(Y,G)$.

Now suppose $\psi_{\varphi_1} = \psi_{\varphi_2}$. Let $y \in Y$ be such that $\pi^{-1} y = \{x\}$ is a singleton. Then $\varphi_1(x) = \pi^{-1}(\psi_{\varphi_1}(y))$ and $\varphi_2(x) = \pi^{-1}(\psi_{\varphi_2}(y))$. Since $\varphi_i$ preserves $\pi$ fibers, for $i \in \{1,2\}$, these are singletons. In particular, $\varphi_1(x) = \varphi_2(x)$. So it is clear then that $g \cdot \varphi_1(x) = g \cdot \varphi_2(x)$ for all $g \in G$, and so $\varphi_1(g \cdot x) = \varphi_2(g \cdot x)$ for all $g \in G$. But every orbit is dense, so $\varphi_1$ and $\varphi_2$ agree on a dense subset of $X$, and hence agree everywhere.
\end{proof}

Finally we prove proposition \ref{abeliantoeplitz}.

\begin{proof}
We have shown in Proposition \ref{onetoone} that $C(X,G)$ embeds into $C(Y,G)$ and by Proposition \ref{centralizer} $C(Y,G)$ is abelian, so $C(X,G)$ is abelian.
\end{proof}

\section{$\mathbb{Z}^d$-Toeplitz Systems}\label{symbolicdynamics}

In this section, we study Toeplitz systems over $\mathbb{Z}^d$ and generalize the construction of Bulatek and Kwiatkowski. In particular, we present a class of Toeplitz systems over $\mathbb{Z}^d$ with a trivial centralizer and positive entropy.

Let $x \in \Sigma^{\mathbb{Z}^d}$. Note that the topological closure of the orbit of $x$, $\overline{O(x)}$ is closed and $T$-invariant. So $(\overline{O(x)}, T)$ is a subshift. This is called the \textit{orbit closure} of $x$.

\begin{definition}
The centralizer of a symbolic dynamical system is called \textit{trivial} if every element $S$ of the centralizer is $T^g$ for some $g \in \mathbb{Z}^d$.
\end{definition}

Let $Z \subseteq \mathbb{Z}^d$ be a finite index subgroup of $\mathbb{Z}^d$ isomorphic to $\mathbb{Z}^d$. For $x \in \Sigma^{\mathbb{Z}^d}$ and $\sigma \in \Sigma$, define
\[
Per(x, Z, \sigma) = \{w \in \mathbb{Z}^d| x(w + z) = \sigma \text{   }\forall z \in Z\}
\]

And,
\[
Per(x, Z) = \bigcup_{\sigma \in \Sigma} Per(x, Z, \sigma)
\]

We say that $x \in \Sigma^{\mathbb{Z}^d}$ is a \textit{Toeplitz array} if for all $v \in \mathbb{Z}^d$, there exists a finite index subgroup $Z \subseteq \mathbb{Z}^d$ isomorphic to $\mathbb{Z}^d$ such that $v \in Per(x, Z)$.

It can be shown that the orbit closure of a Toeplitz Array is an almost one-to-one extension of a $\mathbb{Z}^d$ odometer. For details, the reader is referred to Theorem 7 and Proposition 21 in \cite{cortez2006z}. In fact, almost one-to-one extensions of odometers are exactly those systems which are orbit closures of Toeplitz Arrays. In particular, defining a Toeplitz System as the orbit closure of a Toeplitz Array is equivalent to Definition \ref{Toeplitz}.

We now show how Toeplitz Arrays can be constructed over an alphabet $\Sigma$ borrowing ideas from Downarowicz (\cite{downarowicz2005survey}).

Let $\{p_{t,i}\}_{t=0}^{\infty}, 1 \leq i \leq d$ be $d$ sequences of positive integers such that $p_{0,i} \geq 2$ and $p_{t,i}$ divides $p_{t+1,i}$ for all $0 \leq i \leq d$. Define $\lambda_{t,i} = p_{t+1,i}/p_{t,i}$ and $\lambda_{0,i} = p_{0,i}$ for all $1 \leq i \leq d$ and $t \geq 0$.

An array $\mathbb{Z}^d$ is a point in our system. Any finite rectangular block consisting of letters from our alphabet is called a \textit{finite block}. For a finite block $D$ in $d$ dimensions, we denote the size of $D$ along the $i^{\text{th}}$ dimension as $|D|_i$. We identify the element in the $(i_1,i_2, \ldots, i_d)$ position as $D(i_1,i_2, \ldots, i_d)$ with the standard Cartesian coordinate system, i.e. the left most and bottom-most entry of $D$ is identified with $D(0, 0, \ldots, 0)$.

Specify blocks $A_t$ as follows:

\begin{enumerate}
	\item $|A_t|_i = p_{t,i}$
    \item Some spaces in $A_t$ are filled with elements from $\Sigma$ and others are left unfilled. The unfilled spaces are called \textit{holes}.
    \item The block $A_{t+1}$ is the concatenation of $\lambda_{t+1,i}$ copies of $A_t$ along the $i^{\text{th}}$ dimension for all $1 \leq i \leq d$, where some holes are filled by symbols from $\Sigma$.
    \item For every $(i_1, i_2, \ldots, i_d) \in \mathbb{N}^d$ there exists a $t \geq 0$ such that $A_t(i_1, i_2, \ldots, i_d) \in \Sigma$ and $A_t(p_{t,1}-i_1, p_{t,2}-i_2, \ldots, p_{t,d}-i_d) \in \Sigma$.
\end{enumerate}

We obtain a Toeplitz array $\omega \in \Sigma^{\mathbb{Z}^d}$ by continually repeating the above process. Note that the process described will only tile the first orthant. So to tile the entire space, at each step we shift the origin to be located in the center of our block $A_t$. Continuing this process, we will tile the whole space.

The fourth condition assures that all holes are eventually filled. Note that if after any finite step all holes are filled we will have a periodic array.

Essentially, in this construction we build finite blocks, each of which contains multiple copies of the block built in the previous step. As we copy theses blocks, we fill in some the holes, and leave some them as holes. As we continue this process forever, we will have a Toeplitz Array covering the whole plane.

\begin{example}[One dimensional Toeplitz array] (Due to Downarowicz in \cite{downarowicz2005survey})\label{1d toeplitz element}
\\We will construct a Toeplitz array over $\mathbb{Z}$ from the alphabet $\Sigma = \{0, 1\}$. Let $\{p_t\} = \{2, 4, 8, 16, \ldots\}$ and so $\lambda_t = 2$ for all $t \geq 0$. Let $A_0 = 0 \_$, where the $\_$ symbol indicates a hole. To get $A_1$, we copy $A_0$ twice and fill in some of the holes. Say $A_1 = 0 \underline{1} 0 \_$. The underline indicates a hole that was filled in at that step. In each step we will have two holes. For this construction, at each step we will alternately fill in the first hole with $0$ and $1$. Let the limiting sequence of this process be $\omega$. Continuing, we have
\begin{gather}
\begin{align*}
A_2 &= 010\text{\underline{$0$}}010\_ \\
A_3 &= 0100010\text{\underline{$1$}}0100010\_ \\
A_4 &= 010001010100010\text{\underline{$0$}}010001010100010\_ \\
&\vdots \\
\omega &=0100010101000100010001010100010101000101010001000100010101000100\ldots
\end{align*}
\end{gather}
And so we have a Toeplitz array $\omega$. The orbit closure of this point is a Topelitz system.
\end{example}

\begin{example}[Two dimensional Toeplitz array]\label{twod example}
Again we will use the alphabet $\Sigma = \{0, 1\}$ and we will construct a Toeplitz array over $\mathbb{Z}^2$. Let $\{p_{t, 1}\} = \{p_{t, 2}\}= \{2, 4, 8, 16, \ldots\}$. Then $\lambda_{t, 1} = \lambda_{t,2} = 2$ for all $t \geq 0$.

Let
\[
\begin{tikzpicture}
\draw[step=0.5cm,color=gray] (0,0) grid (1,1);
\node at (0.25,0.25) {$0$};
\node at (0.25,0.75) {$1$};
\node at (0.75,0.75) {$1$};
\node at (-0.5, 0.5) {$A_0 = $};
\filldraw[draw=black,fill=black] (0.5,0) rectangle (1,0.5);
\end{tikzpicture}
\]

\[
\begin{tikzpicture}
\draw[step=0.5cm,color=gray] (0,0) grid (2,2);
\node at (0.25,0.25) {$0$};
\node at (0.25,0.75) {$1$};
\node at (0.25, 1.25) {$0$};
\node at (0.25, 1.75) {$1$};
\node at (0.75, 0.25) {$1$};
\node at (0.75,0.75) {$1$};
\node at (0.75, 1.75) {$1$};
\node at (1.25,0.25) {$0$};
\node at (1.25,0.75) {$1$};
\node at (1.25, 1.25) {$0$};
\node at (1.25, 1.75) {$1$};
\node at (1.75,0.75) {$1$};
\node at (1.75, 1.25) {$0$};
\node at (1.75, 1.75) {$1$};
\draw [step=1.0, very thick] (0,0) grid (2,2);
\node at (-0.5, 1) {$A_1 = $};
\filldraw[draw=black,fill=black] (1.5,0) rectangle (2,0.5);
\filldraw[draw=black,fill=black] (0.5,1) rectangle (1,1.5);
\end{tikzpicture}
\]

\[
\begin{tikzpicture}
\draw[step=0.5cm,color=gray] (0,0) grid (4,4);
\node at (0.25,0.25) {$0$};
\node at (0.25,0.75) {$1$};
\node at (0.25, 1.25) {$0$};
\node at (0.25, 1.75) {$1$};
\node at (0.75, 0.25) {$1$};
\node at (0.75,0.75) {$1$};
\node at (0.75, 1.75) {$1$};
\node at (1.25,0.25) {$0$};
\node at (1.25,0.75) {$1$};
\node at (1.25, 1.25) {$0$};
\node at (1.25, 1.75) {$1$};
\node at (1.75,0.75) {$1$};
\node at (1.75, 1.25) {$0$};
\node at (1.75, 1.75) {$1$};
\node at (0.25,2.25) {$0$};
\node at (0.25,2.75) {$1$};
\node at (0.25, 3.25) {$0$};
\node at (0.25, 3.75) {$1$};
\node at (0.75, 2.25) {$1$};
\node at (0.75,2.75) {$1$};
\node at (0.75, 3.75) {$1$};
\node at (0.75, 1.25) {$0$};
\node at (1.25,2.25) {$0$};
\node at (1.25,2.75) {$1$};
\node at (1.25, 3.25) {$0$};
\node at (1.25, 3.75) {$1$};
\node at (1.75,2.75) {$1$};
\node at (1.75, 3.25) {$0$};
\node at (1.75, 3.75) {$1$};
\node at (1.75, 0.25) {$0$};
\node at (2.25,0.25) {$0$};
\node at (2.25,0.75) {$1$};
\node at (2.25, 1.25) {$0$};
\node at (2.25, 1.75) {$1$};
\node at (2.75, 3.25) {$1$};
\node at (2.75, 0.25) {$1$};
\node at (2.75,0.75) {$1$};
\node at (2.75, 1.75) {$1$};
\node at (3.25,0.25) {$0$};
\node at (3.25,0.75) {$1$};
\node at (3.25, 1.25) {$0$};
\node at (3.25, 1.75) {$1$};
\node at (3.75,0.75) {$1$};
\node at (3.75, 1.25) {$0$};
\node at (3.75, 1.75) {$1$};
\node at (2.25,2.25) {$0$};
\node at (2.25,2.75) {$1$};
\node at (2.25, 3.25) {$0$};
\node at (2.25, 3.75) {$1$};
\node at (2.75, 2.25) {$1$};
\node at (2.75,2.75) {$1$};
\node at (2.75, 3.75) {$1$};
\node at (3.25,2.25) {$0$};
\node at (3.25,2.75) {$1$};
\node at (3.25, 3.25) {$0$};
\node at (3.25, 3.75) {$1$};
\node at (3.75,2.75) {$1$};
\node at (3.75, 3.25) {$0$};
\node at (3.75, 3.75) {$1$};
\node at (3.75, 2.25) {$1$};
\draw [step=2.0, very thick] (0,0) grid (4,4);
\node at (-0.5, 2) {$A_2 = $};
\filldraw[draw=black,fill=black] (1.5,2) rectangle (2,2.5);
\filldraw[draw=black,fill=black] (0.5,3) rectangle (1,3.5);
\filldraw[draw=black,fill=black] (2.5,1) rectangle (3,1.5);
\filldraw[draw=black,fill=black] (3.5,0) rectangle (4,0.5);
\end{tikzpicture}
\]
The black squares indicate where the holes are.
Continuing this process, we will have a coloring of the whole plane, which will be a Toeplitz array, say $\omega$.
\end{example}

We call subblocks of $A_{t+1}$ which coincide with indices of the location of concatenated $A_t$ blocks \textit{t-blocks}. We note that $\omega$ consists of the concatenation of $A_t$ blocks in all directions for any $t$, where all $t$-blocks agree in all locations except for where the holes were. In Example \ref{twod example}, the thick lines in $A_1$ indicate the $0$-blocks, and the thick lines in $A_2$ indicate the $1$-blocks.

Now we introduce a condition on constructing Toeplitz arrays which will give rise to Toeplitz Systems with a trivial centralizer.

\textbf{Condition $(*$)} We say a Toeplitz Array satisfies the condition $(*)$ if:
\begin{itemize}
\item Every $t$-block in $A_{t+1}$ is composed of $A_t$ or $A_t$ with all holes filled
\item The perimeter of $A_{t+1}$ is composed of $t$-blocks which are all filled in
\end{itemize}

Let $e_1, e_2, \ldots, e_d$ be the generators of $\mathbb{Z}^d$. For $1 \leq i \leq d$, let $T_i$ denote a shift by the vector $e_i$. In this context, the shift action on the system can be considered $d$ independent shift actions, i.e. $T^g = T^{(g_1, g_2, \ldots, g_d)} = T_1^{g_1} \times T_2^{g_2} \times \ldots \times T_d^{g_d}$.

\begin{definition}
Given a finite alphabet $\Sigma$, a \textit{patch} is a pair $(P,\mathcal{L})$, where $P \subseteq \mathbb{Z}^d$ and $\mathcal{L} : P \to \Sigma$ is a labeling of $P$. For the purposes of this paper, we will only consider rectangular patches which can be defined by $d$ vectors parallel to the coordinate axes.

Given a patch $(P,\mathcal{L})$, we denote the the coordinate closest to the origin in Cartesian space by $P[0]$. Any other location in the patch is denoted by $P[i]$ where $i \in \mathbb{Z}^d$ is a vector pointing to that location, as referenced from $P[0]$. A square block within $P$ is denoted by $P[i-l, i+k]$ where $k, l \in \mathbb{Z}$ and is the (hyper)cube in $P$ located between $P[i-l\bar{1}]$ and $P[i+k\bar{1}]$, where $\bar{1} = (1, 1, \ldots, 1)$.
\end{definition}

\begin{theorem}
Let $\omega$ be a Toeplitz array satisfying the condition $(*)$. Then the centralizer $C(T)$ of $(\overline{O(\omega)},T)$ is trivial.
\end{theorem}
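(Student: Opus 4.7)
The plan is to show that every $\varphi \in C(T)$ coincides with some integer shift $T^g$. The Curtis--Hedlund--Lyndon theorem for $\mathbb{Z}^d$-subshifts realizes $\varphi$ as a sliding block code of finite radius $r$, so $\varphi(x)(v) = f(x|_{v+B_r})$ for some $f \colon \Sigma^{B_r} \to \Sigma$, where $B_r = [-r,r]^d$. Proposition \ref{onetoone} then produces a commuting homeomorphism $\psi_\varphi \in C(Y,\mathbb{Z}^d)$ on the odometer $Y = \overline{\mathbb{Z}^d}$, and Proposition \ref{centralizer} together with Lemma \ref{translation} identifies $\psi_\varphi$ with translation by some $g_0 \in Y$. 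Because the injectivity clause of Proposition \ref{onetoone} forces $\varphi = T^g$ as soon as $g_0$ equals the image $\phi(g)$ of some $g\in\mathbb{Z}^d$ (in the sense of Lemma \ref{embed}), the entire theorem reduces to showing $g_0 \in \phi(\mathbb{Z}^d)$.

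Writing $g_0 = (g_0^{(t)})_{t\ge 0}$ with $g_0^{(t)} \in \mathbb{Z}^d / G_t$, the goal becomes to exhibit uniformly $\ell^{\infty}$-bounded representatives $\tilde g_t \in \mathbb{Z}^d$ of the classes $g_0^{(t)}$. The key claim is that condition $(*)$ forces $\|\tilde g_t\|_{\infty} \le r$ for all sufficiently large $t$. Granted this bound, fix $t_0$ with $\min_i p_{t_0,i} > 2r$; for every $s \ge t_0$ the representatives $\tilde g_s$ and $\tilde g_{t_0}$ both lie in $[-r,r]^d$ and agree modulo $G_{t_0}$, so they must coincide. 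Hence the sequence stabilizes at a single $g \in \mathbb{Z}^d$, giving $g_0 = \phi(g)$ and $\varphi = T^g$.

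To prove the bound I would work at a Toeplitz point $x \in \overline{O(\omega)}$ whose $\pi$-fiber is a singleton; then $\varphi(x)$ is the unique element of $\pi^{-1}(\pi(x)+g_0)$. The $t$-block tilings of $x$ and $\varphi(x)$ are parallel translates of the lattice $p_t\mathbb{Z}^d$ whose phases differ by exactly $g_0^{(t)}$. Condition $(*)$ enters essentially here: the fully-filled perimeter $t$-blocks of each $(t+1)$-block carry a locally distinguishable fingerprint absent from the ``holey'' interior $t$-blocks, so together they form a detectable landmark grid inside $\omega$. A sliding block code of radius $r$ cannot shift such a landmark by more than $r$ coordinates in any direction, since changing $\varphi(x)(v)$ at a particular $v$ requires changing $x$ within $v+B_r$. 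Therefore the phase offset $g_0^{(t)}$ admits a representative of $\ell^{\infty}$-norm at most $r$, as claimed.

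The main obstacle is turning ``detectable landmark grid'' into a rigorous combinatorial statement: one must verify, using condition $(*)$ at both level $t$ and level $t+1$, that no translate of $\omega$ that is \emph{not} grid-aligned can exhibit the perimeter-filled pattern, so that the grid is unambiguously marked by sufficiently large local windows. This rigidity is precisely what condition $(*)$ is designed to supply; without the all-filled-perimeter requirement, the $t$-block structure would be ambiguous and the phase shifts $g_0^{(t)}$ could drift unboundedly with $t$, yielding genuinely non-integer centralizer elements inside $\overline{\mathbb{Z}^d}$.
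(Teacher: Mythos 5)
Your overall skeleton matches the paper's: realize $\varphi$ as a sliding block code of radius $r$ via Curtis--Hedlund--Lyndon, push it down to the odometer via Proposition \ref{onetoone} to get a translation by some $g_0$ (Lemma \ref{translation}), and reduce the theorem to showing that $g_0$ admits representatives modulo each $G_t$ that are uniformly bounded by $r$, hence stabilize to a genuine element of $\mathbb{Z}^d$; the closing step via the injectivity clause of Proposition \ref{onetoone} is fine. The problem is that the entire content of the theorem lives in the bound $\|\tilde g_t\|_\infty \le r$, and your proposed mechanism for it --- a ``detectable landmark grid'' formed by the fully-filled perimeter $t$-blocks --- does not work as stated and is not what condition $(*)$ is used for. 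Under condition $(*)$, interior $t$-blocks of a $(t+1)$-block may also be fully filled (each $t$-block is either $A_t$ or $A_t$ with all holes filled), so a fully-filled $t$-block is not a pattern occurring only on the perimeter: the perimeter is not locally distinguishable and cannot serve as a landmark. Moreover, even granting a genuine landmark, the inference ``a radius-$r$ block code cannot shift a landmark by more than $r$'' is exactly the statement that needs proof; a block code can in principle synthesize any local pattern in its output from unrelated input data, and the only constraint comes from the image lying in $\overline{O(\omega)}$.

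The paper closes this gap by a different and sharper argument. Suppose the level-$t$ offset exceeds the window size $k$ in some coordinate. Take a $(t+1)$-block $A$ of $x$ and the corresponding $(t+1)$-block $B$ of $y=\varphi(x)$, and locate the hole of $B$ nearest the corner. The window used to compute that entry of $B$ then sits entirely inside the hole-free portion of $A$, which is completely deterministic and identical across all $t$-blocks; hence the symbol written at that hole is the same in every $(t+1)$-block of $y$. Iterating over the remaining holes shows every hole of $y$ is filled in a position-independent way, so $y$ is periodic, contradicting that $\overline{O(\omega)}$ is an infinite Toeplitz system. (The complementary inequality comes from running the same argument on $\varphi^{-1}$ viewed as a past-looking code.) This uses the first clause of condition $(*)$ --- that the only ambiguity between $t$-blocks is at the holes --- rather than the perimeter clause, which instead supplies the deterministic margin needed for the window argument near block boundaries. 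Since your write-up explicitly defers this step (``the main obstacle is turning \ldots into a rigorous combinatorial statement''), the proposal has a genuine gap precisely at the theorem's core.
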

\begin{proof}
Let $(\overline{G},T_1 \times T_2 \times \ldots \times T_d)$ be the maximal equicontinuous factor of $(\overline{O(\omega)},T)$. Denote by $\pi : (\overline{O(\omega)},T) \to (\overline{G},T_1 \times T_2 \times \ldots \times T_d)$ the almost one-to-one factor map. Let $S \in C(T)$. By Proposition \ref{onetoone}, this determines an element $S' \in C(\overline{G},T_1 \times T_2 \times \ldots \times T_d)$ which acts as a translation by some element $h \in G$, by Lemma \ref{translation}. By a result of Hedlund in \cite{hedlund1969endomorphisms}, we note $S$ is determined by a block code $f$ of window size $k \in \mathbb{N}$. In particular, if $u \in \overline{O(\omega)}$, and $z = S(u)$, then
\[
z[i] = f(u[i-k,i+k]) \text{ for all $i \in \mathbb{Z}^d$}
\]
In particular, the automorphism determines what to put in a specific location by looking at a block around that location in the preimage. By choosing appropriate $j \in \mathbb{Z}^d$, we can define $\tilde{S} = S \circ T^j$ which would require $\tilde{S}$ to only look forward. Specifically, for $u \in \overline{O(\omega)}$, and $z = \tilde{S}(u)$ we have \begin{equation}\label{blockmap}
z[i] = f(u[i,i+k]) \text{ for all $i \in \mathbb{Z}^d$}
\end{equation}
for some $k \in \mathbb{N}$. As such, we can assume $S$ is defined as a block map as in (\ref{blockmap}).

Note that $G$ is a product odometer, so $h = (h_1, h_2, \ldots, h_d)$ where $h_i = \sum_{t=0}^{\infty} h_{t,i}p_{t-1,i}$ for $1 \leq i \leq d$ with $0 \leq h_{t,i} \leq \lambda_{t,i}-1$. Each $h_i$ is an element of the one dimensional odometer occurring in the $i^{th}$ coordinate of $h$. Let $m_{t,i} = \sum_{j=0}^t h_{j,i}p_{j-1,i}$ and $m_t = (m_{t,1}, m_{t,2}, \ldots, m_{t,d}) \in \mathbb{Z}^d$. Let $Q_t \subseteq \Sigma^{\mathbb{Z}^d}$ be the clopen cylinder set with $0$'s located in a $t$-dimensional hybercube about the origin. Then $h \in T_1^{m_{t,1}} T_2^{m_{t,2}} \ldots T_d^{m_{t,d}} Q_t$.

We claim that for all $1 \leq i \leq d$ either $m_{t,i} \leq k$ or $m_{t, i} \geq p_{t,i} - k -1$.

Let $x \in \overline{O(\omega)}$ and $y = S(x)$. Suppose that $x$ has a $(t+1)$-block appearing at a location $x[i]$. Then by the construction of Toeplitz subshifts and almost one-to-one extensions, $y$ necessarily has a $(t+1)$-block at the location $y[i-m_t]$.

Let $A$ denote any $(t+1)$-block of $x$. Note that $A = x[i, i+k]$ for some $i \in \mathbb{Z}^d$ and $k \in \mathbb{N}$. This block looks like $A_{t+1}$, which in turn is the concatenation of $A_t$ blocks. In particular, all $t$-blocks are the same, except they may disagree where the holes are located. Specifically, suppose $C$ is a $t$-block and $C[i]$ is the location of the hole in $C$ that is closest to the bottom left corner. In general, we choose the hole whose location vector $i$ has the minimum length. If there is more than one hole with the same minimum length location vector, then we just choose one at random. Note $C[0, i-1]$ is completely determined, and is the same in those locations as every other $t$-block in $x$. The only place where $t$-blocks may potentially disagree is at the holes.

Let $B$ be the $(t+1)$-block in $y$ starting at location $y[i-m_t]$. Suppose the first hole in $B$ occurs at $B[j]$ for some $j \in \mathbb{Z}^d$. This hole occurs at $A[j+m_t]$ in $A$. In order to determine what is at this location in $B$, $S$ looks at a hypercube of side length $k$ around $A[j]$. In view of Equation (\ref{blockmap}), $B[j]$ is determined by $A[j, j+k]$. We note that if $m_{t,i} > k$ for any $1 \leq i \leq d$, then this window would not overlap the hole at $A[j+m_t]$. And since this hole was the hole closest to the bottom left corner, everything in the window $A[j, j+k]$ is not a hole. And so $B[j]$ is uniquely determined, and is not a hole. Since $A$ was an arbitrary $(t+1)$-block, every $(t+1)$-block will have the symbol $B[j]$ located the relative position $j$. In particular, $A[j+m_t] = B[j]$.

We can continue to the next hole in $A$ on the same horizontal level, and the same argument would show that this hole is completely determined. Continuing this argument for every hole, we see that the entire block is completely filled in, and so then $y$ is periodic, which is a contradiction. In general, in $d$ dimensions, we move along hyperplanes in $d-1$ dimensions which are parallel to the coordinate hyperplanes. We fill in all the holes on a constant hyperplane, and then increase levels by one, until we fill in all the holes.

On the other hand, suppose that $B$ is a $(t+1)$-block in $y$ starting at location $y[i+m_t]$. Note that $S^{-1}$  is also determined by a block map. If $S$ is looking forward, then taking a larger $k$ if needed one can show that $S^{-1}$ is a ``past looking" map determined by $z[i] = g([u[i-k,i]])$. Changing the role of $x$ and $y$ and using $S^{-1}$ for $S$ and using the argument similar to the one above, we can show that $m_t < p_t-k$.

The first case is demonstrated for the two dimensional case in Figure \ref{2dproof}. In this figure, $\dot{A_t}$ indicates $A_t$ blocks with all holes filled and the solid black and red squares indicate a hole in $A$ and $B$, respectively.

\begin{centering}
\begin{figure}[h]
\includegraphics[width=8cm, height = 8cm, keepaspectratio]{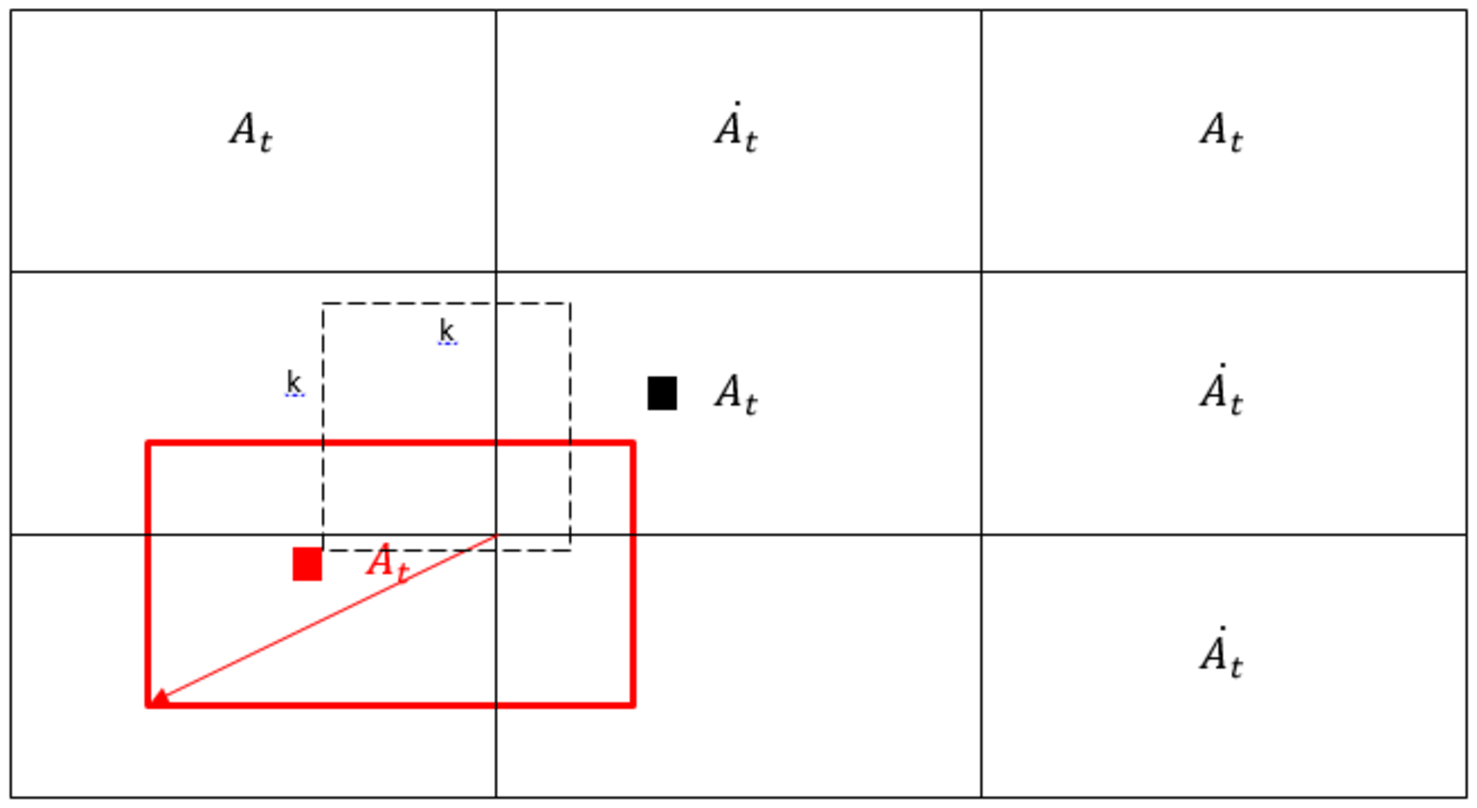}
\caption{ }
\label{2dproof}
\end{figure}
\end{centering}

Now note that $h \in T_1^{m_{t,1}} T_2^{m_{t,2}} \ldots T_d^{m_{t,d}} Q_t$ for all $t$. And since $|m_{t,i}| \leq k \mod p_{t,i}$, we have that $h$ is in the orbit of $0$. In particular,
\[
h \in \bigcap_t T_1^{h_1}T_2^{h_2}\ldots T_d^{h_d} (0) = T^m\{0\}
\]
where $||m||_{\infty} = \max |m_{t,i}| \leq k$.
So, $h = S'(0) = T^h (0)$, i.e. $S'$ and $T$ agree on one point. Furthermore, $S'$ agrees with the action of $h \in \mathbb{Z}^d$ on the entire orbit of $0$, which is dense. Therefore $S'$ is a power of the shift, i.e. $S' = T^h$.

Let $\alpha$ be in the orbit of $\omega$ in $(\overline{O(\omega)},T)$, i.e. $\alpha = T^g \omega$ for some $g \in \mathbb{Z}^d$. Note
\begin{align*}
\pi S(\alpha) &= \pi S(T^g \omega)\\
&= S'\pi(T^g \omega)\\
&= S' T^g(0)\\
&=T^h T^g (0)\\
&= \pi T^h T^g\omega = \pi T^h(\alpha)
\end{align*}
So $S(\alpha)$ and $T^h(\alpha)$ are in the same $\pi$ fiber. Since $\alpha$ is in the orbit of $\omega$, it has a unique preimage under $\pi$. Therefore $S(\alpha) = T^h(\alpha)$. And so $S$ and $T^h$ agree on the entire orbit of $\omega$, which is dense. So $S = T^h$.
\end{proof}

\section{Positive Entropy Toeplitz Subshift} \label{section:positive entropy}

We now construct an explicit example of a two dimensional Toeplitz subshift which has positive entropy. This example is constructed so that it obeys the $(*)$ condition, thus ensuring that it has a trivial centralizer.

Let $h > 0$ and choose $l_0$ such that $\log(l_0-1) \leq h \leq \log(l_0)$. For $i \geq 0$, let $\varepsilon_i > 0$ and $\{\varepsilon_i\}$ be such that $\sum_{i=0}^{\infty} \varepsilon_i < \frac{h}{2}$.

We note that for any $l$ and any $\varepsilon > 0$, there exists $n \in \mathbb{N}$ sufficiently large such that
\begin{equation}\label{inequality}
\frac{\log\left(l^{n^2}\right)}{(n+2)^2} \geq \log(l) - \varepsilon
\end{equation}
since $\left(\frac{n}{n+2}\right)^2 \to 1$.

Let $q_0$ be chosen so that
\[\frac{\log({l_0}^{q_0^2})}{(q_0 + 2)^2} \geq \log(l_0) - \frac{\varepsilon_0}{2}
\]

Also require $q_0^2 \geq l_0$.
Define $l_1 = l_0^{q_0^2}$. We notice that there are $l_0^{q_0^2}$ square blocks of side length $q_0$ over the alphabet $\{0, 1, \ldots, l_0 -1\}$. We enumerate these blocks as $B_i^{(0)}$ for $0 \leq i \leq l_1 -1$. Furthermore, we require that $B_0^{(0)}$ and $B_1^{(0)}$ contain every letter from the alphabet. Let $C_i^{(0)}$ be the square block of side length $q_0 + 2$ with the block $B_i^{(0)}$ surrounded by a $0$ in the top left corner, a $1$ in the bottom right corner, and $0$'s below the main diagonal and $1$'s above it, as in the diagram below. We will denote this as $C_i^{(0)} = 0 B_i^{(0)} 1$ for $0 \leq i \leq l_1-1$.

\[
\begin{tikzpicture}
\draw[step=0.5cm,color=gray] (0,0) grid (0.5,2);
\node at (0.25,0.25) {$0$};
\node at (0.25,0.75) {$\vdots$};
\node at (0.25, 1.25) {$0$};
\node at (0.25, 1.75) {$0$};
\draw[step=0.5cm,color=gray] (0.5, 0) grid (2, 0.5);
\node at (0.75, 0.25) {$\cdots$};
\node at (1.25, 0.25) {$0$};
\node at (1.75, 0.25) {$1$};
\draw[step=0.5cm,color=gray] (1.5, 0.5) grid (2, 2);
\node at (1.75, 0.75) {$1$};
\node at (1.75, 1.25) {$\vdots$};
\node at (1.75, 1.75) {$1$};
\draw[step=0.5cm,color=gray] (0.5, 1.5) grid (1.5, 1.5);
\node at (0.75, 1.75) {$1$};
\node at (1.25, 1.75) {$\cdots$};
\draw[step=1cm,color=white] (0.5, 0.5) grid (1.5, 1.5);
\node at (1, 1) {$B_i^{(0)}$};
\draw[color=gray] (1.5, 0.5) -- (1.5, 2);
\draw[color=gray] (0.5, 1.5) -- (1.5,1.5);
\draw[color=gray] (1, 1.5) -- (1, 2);
\node at (-0.75,1) {$C_i^{(0)} = $};
\end{tikzpicture}
\]

For $k \geq 1$, define $l_k = l_{k-1}^{q_{k-1}^2}$ and let $q_k$ be such that
\begin{equation}\label{one}
\frac{\log({l_k}^{q_k^2})}{(q_k + 2)^2} \geq \log(l_k) - \frac{\varepsilon_k}{2}
\end{equation}
Additionally, require that $q_k^2 \geq l_k$.
Let $B_i^{(k)}$ be all the square blocks of side length $q_k$ over the alphabet $\{0, 1, \ldots, l_k-1\}$ for $0 \leq i \leq l_{k+1} -1$. Require that $B_0^{(k)}$ and $B_1^{(k)}$ contain every letter from the alphabet. Let $C_i^{(k)} = 0 B_i^{(k)} 1$ for $0 \leq i \leq l_{k+1}-1$.
Define $\lambda_k = q_k + 2$ and $p_k = \lambda_1 \lambda_2 \ldots \lambda_k$.

Consider the following operation on finite blocks. Let $\{A_1, A_2, \ldots, A_n\}$ be square blocks of the same side length, $A$ over some alphabet. Let $B$ be a square block whose side length is at least $\sqrt{n}$ over an alphabet containing $\{1, 2, \ldots, n\}$. We define the block
\[
C = \{A_1, A_2, \ldots, A_n\} *  B
\]
as $C[i,j] = A_{B[i.j]}$. In particular, $C$ will be a square block of side length $|B| \cdot A$.

We are constructing a tiling of $\mathbb{Z}^2$ using $k$-blocks as building blocks. Additionally, we must construct these blocks so that they satisfy the $(*)$ condition. As such we define $k$-blocks in the following way:
Let $A_i^{(0)} = C_i^{(0)}$ and
\[
A_i^{(k)} = \{A_0^{(k-1)}, A_1^{(k-1)}, \ldots, A_{l_k-1}^{(k-1)}\} * C_i^{(k)}
\]

We note that since $C_0^{(0)}$ and $C_1^{(0)}$ have every letter of the alphabet $\{0,1, \ldots, l_0-1\}$, the blocks $A_0^{(1)}$ and $A_1^{(0)}$ will have every $0-$block as a subblock. Similarly, $C_0^{(1)}$ and $C_1^{(1)}$ contain every letter in $\{0, 1, \ldots, l_1-1\}$ and so the blocks $A_0^{(2)}$ and $A_1^{(2)}$ will contain every $1-$ block as a subblock. In general, we note that each block $A_i^{(k)}$ for $i = 0, 1$ has every $(k-1)-$block as a subblock.

We let
\[
\begin{tikzpicture}
\draw[step=0.5cm,color=gray] (0,0) grid (0.5,2);
\node at (0.25,0.25) {$0$};
\node at (0.25,0.75) {$\vdots$};
\node at (0.25, 1.25) {$0$};
\node at (0.25, 1.75) {$0$};
\draw[step=0.5cm,color=gray] (0.5, 0) grid (2, 0.5);
\node at (0.75, 0.25) {$\cdots$};
\node at (1.25, 0.25) {$0$};
\node at (1.75, 0.25) {$1$};
\draw[step=0.5cm,color=gray] (1.5, 0.5) grid (2, 2);
\node at (1.75, 0.75) {$1$};
\node at (1.75, 1.25) {$\vdots$};
\node at (1.75, 1.75) {$1$};
\draw[step=0.5cm,color=gray] (0.5, 1.5) grid (1.5, 1.5);
\node at (0.75, 1.75) {$1$};
\node at (1.25, 1.75) {$\cdots$};
\draw[step=1cm,color=white] (0.5, 0.5) grid (1.5, 1.5);
\node at (1, 1) {$\_$};
\draw[color=gray] (1.5, 0.5) -- (1.5, 2);
\draw[color=gray] (0.5, 1.5) -- (1.5,1.5);
\draw[color=gray] (1, 1.5) -- (1, 2);
\node at (-0.5, 1) {$A_0 = $};
\end{tikzpicture}
\]
where the side length of the square box $A_0$ is $q_0 + 2$, and the dash in the center square indicates a square of side length $q_0$ consisting of all holes.

Define
\[
\begin{tikzpicture}
\draw[step=1cm,color=gray] (0,0) grid (5,5);
\node at (0.5,0.5) {$A_0^{(k)}$};
\node at (0.5,1.5) {$A_0^{(k)}$};
\node at (0.5,2.5) {$\vdots$};
\node at (0.5,3.5) {$A_0^{(k)}$};
\node at (0.5, 4.5) {$A_0^{(k)}$};
\node at (1.5, 0.5) {$A_0^{(k)}$};
\node at (1.5, 1.5) {$A_k$};
\node at (1.5, 2.5) {$\vdots$};
\node at (1.5, 3.5) {$A_k$};
\node at (1.5, 4.5) {$A_1^{(k)}$};
\node at (2.5, 0.5) {$\cdots$};
\node at (2.5, 1.5) {$\cdots$};
\node at (2.5, 2.5) {$\ddots$};
\node at (2.5, 3.5) {$\cdots$};
\node at (2.5, 4.5) {$\cdots$};
\node at (3.5, 0.5) {$A_0^{(k)}$};
\node at (3.5, 1.5) {$A_k$};
\node at (3.5, 2.5) {$\vdots$};
\node at (3.5, 3.5) {$A_k$};
\node at (3.5, 4.5) {$A_1^{(k)}$};
\node at (4.5, 0.5) {$A_1^{(k)}$};
\node at (4.5, 1.5) {$A_1^{(k)}$};
\node at (4.5,2.5) {$\vdots$};
\node at (4.5, 3.5) {$A_1^{(k)}$};
\node at (4.5, 4.5) {$A_1^{(k)}$};
\node at (-0.75,2.5) {$A_{k+1} = $};
\fill[gray,opacity=0.3] (0,0) rectangle (1,5);
\fill[gray,opacity=0.3] (1,0) rectangle (5,1);
\fill[gray,opacity=0.3] (4,1) rectangle (5,5);
\fill[gray,opacity=0.3] (1,4) rectangle (4,5);

\end{tikzpicture}
\]

where there is a square block consisting of $q_k^2$ copies of $A_k$ surrounded by $4 q_k + 4$ copies of $A_i^{(k)}$ for $i = 0$ or $1$ on each side. Notice that $A_0^{(k)}$ and $A_1^{(k)}$ have no holes, so all the holes are contained in the middle block of $A_k$ blocks.

Let $\omega$ be the limiting array from the above process. We note here that $\omega$ satisfies the $(*)$ condition.

\begin{proposition}
The Toeplitz system $(\overline{O(\omega)}, T)$ has positive entropy.
\end{proposition}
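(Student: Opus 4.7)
The plan is to lower-bound $h(\overline{O(\omega)}, T)$ via the standard complexity formula $h = \lim_{n \to \infty} n^{-2} \log N_n$, where $N_n$ counts distinct $n \times n$ patterns appearing in the system. At the natural scales $n_k := |A^{(k)}|$ (the side length of a level-$k$ filled block, namely $\prod_{j=0}^{k} \lambda_j$), I would lower-bound $N_{n_k}$ by counting the distinct filled blocks $A_i^{(k)}$, and then telescope inequality~\eqref{one} to show the resulting density stays bounded below by a positive multiple of $h$. The delicate step is verifying that each of the $l_{k+1}$ blocks $A_i^{(k)}$ actually appears in $\omega$; once this is in hand, the rest is a routine telescoping calculation.

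\emph{Block counting.} I would show that every one of the $l_{k+1}$ filled blocks $A_i^{(k)}$ occurs as an aligned $n_k \times n_k$ subblock of $\omega$. By construction, $A_0^{(k+1)}$ appears on the perimeter of every $A_{k+2}$-tile, and therefore appears in $\omega$. Unpacking the definition
\[
A_0^{(k+1)} \;=\; \{A_0^{(k)}, A_1^{(k)}, \ldots, A_{l_{k+1}-1}^{(k)}\} * C_0^{(k+1)},
\]
the middle cells of $A_0^{(k+1)}$ are indexed by the letters of $B_0^{(k+1)}$, which by hypothesis contains every letter of $\{0, 1, \ldots, l_{k+1}-1\}$. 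Consequently each $A_i^{(k)}$ appears as an aligned middle cell of $A_0^{(k+1)}$, hence as an aligned subblock of $\omega$. Pairwise distinctness of the $A_i^{(k)}$'s follows by induction on $k$: distinct $B_i^{(k)}$'s yield distinct $C_i^{(k)}$'s, and the substitution operator $*$ preserves distinctness when its constituent blocks are pairwise distinct (the inductive hypothesis, with base case $A_i^{(0)} = C_i^{(0)}$). This gives $N_{n_k}(\overline{O(\omega)}) \geq l_{k+1}$.

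\emph{Telescoping.} Rewriting inequality~\eqref{one} as $\log l_{k+1} \geq \lambda_k^2 (\log l_k - \varepsilon_k/2)$ and dividing by $n_k^2 = \lambda_k^2 \, n_{k-1}^2$ gives
\[
\frac{\log l_{k+1}}{n_k^2} \;\geq\; \frac{\log l_k}{n_{k-1}^2} \;-\; \frac{\varepsilon_k}{2 \, n_{k-1}^2} \;\geq\; \frac{\log l_k}{n_{k-1}^2} \;-\; \frac{\varepsilon_k}{2}.
\]
Iterating this recursion from the base case $\log l_1 / \lambda_0^2 \geq \log l_0 - \varepsilon_0/2$ yields
\[
\frac{\log l_{k+1}}{n_k^2} \;\geq\; \log l_0 \;-\; \tfrac{1}{2}\sum_{j=0}^{\infty} \varepsilon_j \;>\; \log l_0 - \tfrac{h}{4} \;\geq\; \tfrac{3h}{4} \;>\; 0,
\]
where I have used $\log l_0 \geq h$ from the choice of $l_0$ and $\sum_j \varepsilon_j < h/2$ from the hypothesis. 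Combining with the counting step, $h(\overline{O(\omega)}, T) \;\geq\; \liminf_{k \to \infty} n_k^{-2} \log N_{n_k} \;\geq\; 3h/4 \;>\; 0$.
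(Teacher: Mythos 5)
Your proposal is correct and follows essentially the same route as the paper: lower-bound the complexity at the scales $p_k$ by the $l_{k+1}$ level-$k$ blocks (all of which appear because $B_0^{(k+1)}$ contains every letter), then telescope inequality~\eqref{one}. You supply some details the paper leaves implicit (pairwise distinctness of the $A_i^{(k)}$ and the precise location where each one occurs) and obtain the slightly sharper constant $3h/4$ in place of $h/2$, but the argument is the same.
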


\begin{proof}
Let $h_{\omega}$ be the entropy of $(\overline{O(\omega)}, T)$ and let $\Theta(n)$ be the number of square blocks of side length $n$ appearing in $\omega$. We note that $h_{\omega} = \lim_{n \to \infty} \frac{\log(\Theta(n))}{n^2} = \lim_{k \to \infty} \frac{\log(\Theta(p_k))}{p_k^2}$, by switching to a subsequence.

There are $l_{k+1}$ many $k-$blocks. We note that every $A_k$ block contains every $(k-1)-$ block as a subblock. This is because the blocks $C_i^{(k)}$ for $i =0$ or $i=1$ contain every letter of the alphabet in them. This means that as we do the shuffling process described above, the blocks $A_i^{(k)}$ for $i = 0$ or $i=1$ contain every single block $A_i^{(k-1)}$ for $0 \leq i \leq l_k-1$. The blocks $A_i^{(k)}$ for $i = 0$ or $i=1$ are exactly those which occur in the $k-$blocks, and so they contain every $(k-1)-$block as a subblock. Furthermore, since $k-$blocks are squares of side length $p_k$, there are at least as many blocks of side length $p_k$ occurring in $\omega$ as there are $k-$blocks. Specifically, square blocks of length $p_k$ can occur at any position within $\omega$, while $k-$blocks only occur at specific positions. Hence we have
\begin{equation}
\Theta(p_k) \geq l_{k+1}
\end{equation}

So we have
\begin{equation}\label{third}
h_{\omega} \geq \limsup_{k \to \infty} \frac{\log(l_{k+1})}{p_k^2}
\end{equation}

By (\ref{one}) we have that \[
\frac{\log(l_{k+1})}{\lambda_k^2} \geq \log(l_k)  - \frac{\epsilon_k}{2}
\]
It then follows, and by (\ref{inequality}), that
\[
\frac{\log(l_{k+1})}{p_k^2} \geq \frac{\lambda_k^2(\log(l_k) - \frac{\varepsilon_k}{2})}{p_k^2} = \frac{\log(l_k) - \frac{\varepsilon_k}{2}}{p_{k-1}^2} \geq \frac{\log(l_k)}{p_{k-1}^2} - \varepsilon_k
\]

Continuing, we have
\[
\frac{\log(l_{k+1})}{p_k^2} \geq h - \sum_{i = 0}^k \varepsilon_i
\]

Taking the limit as $k \to \infty$, from (\ref{third}), we have $h_{\omega} \geq h/2 > 0$.

It is a basic fact that every Toeplitz system is minimal, so this system is minimal. It is either finite or uncountable, and since it has positive entropy, it cannot be finite. So this is an infinite minimal Toeplitz system.

\end{proof}

\newpage
\bibliographystyle{plain}
\bibliography{biblio}

\end{document}